\newtheorem{prethm}{{\bf Theorem}}[section]
\newenvironment{thm}{\begin{prethm}{\hspace{-0.5
em}{\bf.}}}{\end{prethm}}
\newtheorem{prepro}{{\bf Theorem}}
\newtheorem{precor}[prethm]{{\bf Corollary}}
\newenvironment{cor}{\begin{precor}{\hspace{-0.5
em}{\bf.}}}{\end{precor}}
\newtheorem{preconj}[prethm]{{\bf Conjecture}}
\newtheorem{preremark}[prethm]{{\bf Remark}}
\newenvironment{remark}{\begin{preremark}\em{\hspace{-0.5
em}{\bf.}}}{\end{preremark}}
\newtheorem{prelem}[prethm]{{\bf Lemma}}
\newenvironment{lem}{\begin{prelem}{\hspace{-0.5
em}{\bf.}}}{\end{prelem}}
\newtheorem{preque}[prethm]{{\bf Question}}
\newtheorem{preobserv}[prethm]{{\bf Observation}}
\newtheorem{preproposition}[prethm]{{\bf Proposition}}
\newtheorem{preproof}{{\bf Proof.}}
\newtheorem{preprooff}{{\bf Proof}}
\newenvironment{proof}[1]{\begin{preproof}{\rm
#1}\hfill{$\Box$}}{\end{preproof}}
\newtheorem{preproofs}{{\bf The second proof of }}
\newtheorem{preprooft}{{\bf The third proof of }}
\newtheorem{preproofF}{{\bf Proof of}}
\title{\bf\Large 
The existence of $m$-tree-connected $(g,f+f'-m)$-factors using $(g,f)$-factors and $m$-tree-connected $(m,f')$-factors
}
\author{{\normalsize{\sc Morteza Hasanvand${}$} }\vspace{3mm}
\\{\footnotesize{${}$\it Department of Mathematical
 Sciences, Sharif
University of Technology, Tehran, Iran}}
{\footnotesize{}}\\{\footnotesize{ $\mathsf{morteza.hasanvand@alum.sharif.edu }$ }}}
\date{}
\begin{document}
\maketitle
\begin{abstract}{
Let $G$ be a graph and let $g$, $f$, and $f'$ be three positive integer-valued functions on $V(G)$ with $g\le f$. Tokuda, Xu, and Wang (2003) showed that if $G$ contains a $(g,f)$-factor and a spanning $f'$-tree, then $G$ also contains a connected $(g,f+f'-1)$-factor. In this note, we develop their result to a tree-connected version by proving that if $G$ contains a $(g,f)$-factor and an $m$-tree-connected $(m,f')$-factor, then $G$ also contains an $m$-tree-connected $(g,f+f'-m)$-factor, provided that $f\ge m$. In addition, we show that $g$ allows to be nonnegative.
\\
\\
\noindent {\small {\it Keywords}:
\\
Spanning tree;
matching;
connected factor;
bounded degree;
tree-connectivity.
}} {\small
}
\end{abstract}
%
%
%
%
%
%
%
%
%
%
%
%
%
%
\section{Introduction}
In this note, all graphs have no loop, but multiple edges are allowed and a simple graph is a graph without multiple edges.
 Let $G$ be a graph. 
The vertex set, the edge set, and the maximum degree of $G$ are denoted by $V(G)$, $E(G)$, $\Delta(G)$, respectively. 
The degree $d_G(v)$ of a vertex $v$ is the number of edges of $G$ incident to $v$.
The set of edges of $G$ that are incident to $v$ is denoted by $E_G(v)$.
For a set $A$ of integers, an {\it $A$-factor} is a spanning subgraph with vertex degrees in $A$. 
Let $g$ and $f$ be two nonnegative integer-valued functions on $V(G)$.
A {\it $(g,f)$-factor} refers to spanning subgraph $F$ such that for each vertex $v$, $g(v)\le d_F(v)\le f(v)$.
A graph $G$ is called {\it $m$-tree-connected}, if it has $m$ edge-disjoint spanning trees. 
 Throughout this article, all variables $m$ are positive integers.
%
%
%
%
%
%
%

Li, Xu, Chen, and Liu~\cite{Li-Xu-Chen-Liu-2005} showed that every graph containing a Hamiltonian path and a $k$-factor must have a connected $\{k,k+1\}$-factor. More generally, they proved the following stronger assertion. Note that by inserting a Hamiltonian path into a $k$-factor, one can only guarantee the existence of a connected $\{k,k+1,k+2\}$-factor.
\begin{thm}{\rm(\cite{Li-Xu-Chen-Liu-2005})}\label{intro:thm:Hamiltonian-path}
{Let $G$ be a graph. If $G$ contains a $(g,f)$-factor and a Hamiltonian path, then it contains a connected $(g,f+1)$-factor,
where $g$ and $f$ are positive integer-valued functions on $V(G)$ with $2\le g\le f$.
}\end{thm}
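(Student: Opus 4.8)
The plan is to keep the given $(g,f)$-factor $F$ essentially intact and to connect its components by adding a carefully chosen set $S$ of edges of the Hamiltonian path $P=v_1v_2\cdots v_n$, where $S$ raises every vertex degree by at most one. First I would record the only consequence of the hypothesis $2\le g$ that I intend to exploit: every vertex satisfies $d_F(v)\ge 2$, so each component of $F$ contains a cycle and therefore has at least three vertices. I would then reduce the theorem to the following purely combinatorial claim: there is a set $S\subseteq E(P)$ that is a matching (no two edges of $S$ share an endpoint) and for which $F\cup S$ is connected. Indeed, any edge of $P$ joining two vertices of the same component of $F$ is useless for connectivity and may be discarded, so we may assume every edge of $S$ joins two distinct components of $F$ and hence lies outside $F$. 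Then $F\cup S$ is a spanning subgraph with $d_F(v)\le d_{F\cup S}(v)\le d_F(v)+1$ for every $v$, so $g(v)\le d_{F\cup S}(v)\le f(v)+1$, and connectivity holds by the choice of $S$. Thus $F\cup S$ is the desired connected $(g,f+1)$-factor, and the whole difficulty is concentrated in producing $S$.

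To produce $S$, contract each component of $F$ to a single vertex and let $H$ be the resulting connected multigraph; the edges of $H$ are exactly the edges of $P$ joining two different components of $F$, which I call \emph{crossing} edges. A matching $S$ of crossing edges makes $F\cup S$ connected precisely when the selected edges form a spanning connected subgraph of $H$, so it suffices to choose a spanning tree of $H$ whose edges, read back in $G$, meet each vertex of $G$ at most once. A natural first attempt is the \emph{first-entry} tree: order the components by the first index at which $P$ meets them, and for every component other than the first keep the crossing edge through which $P$ enters it for the first time. Walking along $P$ shows immediately that this edge set is a spanning tree of $H$, since the tail of each first-entry edge lies in a component already reached.

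The main obstacle is the matching condition, which can fail only at a vertex $v_i$ lying between two selected crossing edges, namely when $P$ enters a new component at $v_i$ and immediately enters yet another new component at $v_{i+1}$, forcing both $v_{i-1}v_i$ and $v_iv_{i+1}$ into the tree. This is exactly where the size condition enters. At such a conflict the component whose visit is a single vertex at this spot has at least three vertices, so it reappears elsewhere along $P$ and therefore carries a crossing edge at a second vertex; I would release one of the two conflicting edges and reconnect that component through the alternative crossing edge, which attaches at a \emph{different} vertex. The delicate point is to carry out all such reroutings simultaneously while preserving the spanning-tree structure of $H$ and creating no new conflict. I expect the cleanest way to organize this is to phrase the requirement as finding a common independent set of size $|V(H)|-1$ in the intersection of the graphic matroid of $H$ with the partition matroid on $E(G)$ that caps each vertex at one incident edge, and to verify the Edmonds feasibility bound using that every component of $F$ offers at least three admissible attachment vertices; the first-entry construction together with the rerouting furnishes the exchange argument that makes the bound hold. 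Once $S$ is obtained, the degree and connectivity checks from the reduction complete the proof.
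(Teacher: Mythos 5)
Your reduction is sound: if you can find a matching $S\subseteq E(P)$ consisting of edges joining distinct components of $F$ such that $F\cup S$ is connected, then $F\cup S$ is indeed a connected $(g,f+1)$-factor. But that existence claim is the entire theorem, and your proposal does not prove it. The first-entry tree plus rerouting is only a sketch, and the difficulty you yourself flag is real: when a conflict occurs at a singleton visit of a component $C$, the ``alternative crossing edge'' at a later visit of $C$ may attach $C$ to a component lying in the subtree that hangs below $C$ in the first-entry tree (for instance, precisely the component $D$ involved in the conflict, whose first-entry edge goes into $C$), in which case the swap disconnects the tree; and even when it reconnects, it can create a new conflict at the other end, so the reroutings cannot simply be performed independently. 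Worse, the tool you invoke to make this rigorous does not exist in the form you need: the family of matchings of a graph is \emph{not} a matroid (the stars $E_G(v)$ do not partition $E(G)$, since every edge lies in two of them, and matchings violate the exchange axiom --- in a path on four vertices both $\{v_2v_3\}$ and $\{v_1v_2,v_3v_4\}$ are maximal). Using the bipartition of $P$ into odd and even vertices, the matching constraint is the intersection of two partition matroids, so your problem is a common independent set in \emph{three} matroids (those two plus the graphic matroid of $H$), which is outside the scope of Edmonds' two-matroid intersection theorem. A further, smaller issue: this paper allows multiple edges, so $g\ge 2$ does not give components of $F$ with at least three vertices (a doubled edge is a component on two vertices), undercutting the ``at least three attachment vertices'' count your rerouting relies on.

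It is also worth seeing how the paper avoids this difficulty entirely. Theorem~\ref{intro:thm:Hamiltonian-path} is obtained there as the special case $f'\equiv 2$ of the spanning-tree results, whose proof (Theorem~\ref{thm:F:matching}) goes in the opposite direction: rather than adding a matching of path edges to an intact $F$, it first \emph{removes} a matching $M\subseteq E(F)$ (one edge per non-trivial component of $F$, chosen at a non-cut vertex), starts from the connected graph $T\cup F$, and repeatedly deletes edges while maintaining a certifying spanning tree $T_0$ via exchange arguments, finally restoring edges of $M$ to fix the lower degree bounds. The slack created by deleting $M$ is exactly what makes the upper bound $d_T(v)+\max\{0,d_F(v)-1\}\le f(v)+1$ attainable without any hard matching-insertion claim, and it is why the hypothesis $g\ge 2$ can be relaxed to $g\ge 0$. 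If you want to salvage your approach, you must either give a complete combinatorial proof of your insertion claim (which, if true, is a genuinely stronger statement, since it produces a connected factor containing $F$) or switch to a deletion-and-exchange scheme of the paper's type.
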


Later, Tokuda, Xu, and Wang~\cite{Tokuda-Xu-Wang-2003} generalized Theorem~\ref{intro:thm:Hamiltonian-path} to the following version by replacing the assumption of having a spanning $f'$-free.
In Section~\ref{sec:Matching-Tree}, we refine this result by showing that $g$ allows to be a nonnegative integer-valued function.
\begin{thm}{\rm(\cite{Tokuda-Xu-Wang-2003})}\label{intro:thm:f'-tree}
{Let $G$ be a graph. If $G$ contains a $(g,f)$-factor and a spanning $f'$-tree, then $G$ contains a connected $(g,f+f'-1)$-factor, where $g$, $f$, and $f'$ are positive integer-valued functions on $V(G)$ with $g\le f$.
}\end{thm}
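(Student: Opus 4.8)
The plan is to reduce the statement to a degree-constrained connectivity problem on the single graph $G_0 := F \cup T$, where $F$ is a $(g,f)$-factor and $T$ a spanning $f'$-tree. Since $T$ is spanning and connected, $G_0$ is connected and spanning, and $d_{G_0}(v) \le d_F(v) + d_T(v) \le f(v) + f'(v)$ for every $v$. I claim it suffices to find a connected spanning subgraph $H \subseteq G_0$ with $g(v) \le d_H(v) \le f(v) + f'(v) - 1$ for all $v$: such an $H$ is exactly the desired connected $(g, f+f'-1)$-factor. The interval is nonempty because $f \ge g$ and $f' \ge 1$ give $f + f' - 1 \ge g$; moreover a vertex can violate the upper bound only if $d_F(v) = f(v)$, $d_T(v) = f'(v)$, and no edge is shared at $v$, forcing $d_{G_0}(v) = f(v) + f'(v)$.

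First I would record the crucial slack: at every vertex $f(v) + f'(v) > g(v)$, so an \emph{over-saturated} vertex (one with $d_{G_0}(v) = f(v) + f'(v)$) is never itself at its lower bound. Then I would run an extremal argument: among all connected spanning subgraphs $H$ of $G_0$ with $d_H(v) \ge g(v)$ for every $v$ (this family is nonempty, as $G_0$ lies in it), I choose one minimizing the total excess $\sum_v \max\{0, d_H(v) - (f(v)+f'(v)-1)\}$, breaking ties by the number of edges. The goal is to show the minimum excess is $0$, which then produces $H$.

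Suppose not, so some $v$ has $d_H(v) = f(v)+f'(v)$ and $H$ therefore contains every $F$-edge and every $T$-edge at $v$. The aim is a local modification that strictly lowers the potential while staying inside $G_0$, connected, and above $g$ everywhere, contradicting minimality. The easy case is a non-bridge edge $vu$ with $d_H(u) > g(u)$: deleting it preserves connectivity and all lower bounds and drops the excess by one. By minimality this cannot occur, so every edge at $v$ is either a bridge of $H$ or runs to a neighbour sitting exactly at its lower bound. To break this deadlock I would exploit the tree backbone: each of the $f'(v) \ge 1$ tree edges $vu_i$ is the unique $T$-link between $v$ and the subtree $T_i$ hanging at $u_i$, so if $vu_i$ is a bridge of $H$ then no $H$-edge crosses that cut. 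I would then either reroute along an $F$-edge crossing a different cut, or, for a non-bridge edge $vu$ with $d_H(u)=g(u)$, perform a swap deleting $vu$ and adding an unused $G_0$-edge at $u$ to some $w \ne v$ with spare capacity, keeping $d_H(u)$ fixed while lowering $d_H(v)$; an alternating-path analysis in the spirit of degree-constrained factor theory then guarantees that some such move is always available.

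The main obstacle is exactly this last step: simultaneously preserving connectivity and all lower bounds $g$ while forcing an over-saturated vertex down. Pure deletion is blocked when every incident edge is either a bridge or leads to a tight neighbour, and the resolution must combine the acyclic structure of $T$ (to locate the cuts that deletions threaten) with the components of $F$ (to supply rerouting edges), pushing the deficiency along an alternating path until it is absorbed at a vertex with slack. Verifying that this path always terminates correctly, using $f \ge g$, $f' \ge 1$, and $g \ge 1$, is the crux; once the upper bound is secured, $H$ is the required connected $(g, f+f'-1)$-factor.
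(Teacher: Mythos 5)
Your reduction to the single graph $G_0=F\cup T$ and the extremal setup (minimize total excess over connected spanning subgraphs satisfying $d_H\ge g$) are reasonable, but the proof has a genuine gap exactly at the point you yourself flag as the crux: the deadlock in which every edge at an over-saturated vertex $v$ is either a bridge of $H$ or joins $v$ to a tight neighbour $u$ with $d_H(u)=g(u)$ is never resolved, only asserted to be resolvable by ``an alternating-path analysis in the spirit of degree-constrained factor theory.'' That missing step is the entire substance of the theorem, and the obstructions are concrete. First, a tight neighbour $u$ may satisfy $d_{G_0}(u)=g(u)$ (this happens when $d_F(u)=g(u)$ and every $T$-edge at $u$ is also an $F$-edge); then no unused $G_0$-edge at $u$ exists, so your proposed swap at $u$ is impossible, and in fact the edge $vu$ lies in every admissible subgraph. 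Second, when $vu$ is a bridge of $H$, rerouting requires adding a $G_0$-edge across the cut, which may create a new unit of excess at its endpoints while the deletion removes only one unit at $v$ and may simultaneously break the lower bound at $u$; the total excess and the edge count need not strictly decrease, so minimality of $H$ yields no contradiction, and you give no termination argument for the alternating path. Without a strictly decreasing potential or an invariant that protects connectivity and the lower bound at the same time, the local moves you describe simply do not suffice.

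It is instructive to note the two devices the paper uses to break precisely this deadlock, both absent from your sketch. Instead of carrying the lower bound $g$ as a side constraint, the paper maintains the containment $E(F)\setminus E(M)\subseteq E(H)$, where $M$ is a matching with exactly one edge in each non-trivial component of $F$, chosen incident to a non-cut vertex of $F$; the bound $d_H\ge d_F\ge g$ then follows from this containment together with one final exchange (its Claim B). More decisively, alongside $H$ it maintains a spanning tree $T_0$ of $H$ as an explicit connectivity certificate, with the invariant that $T_0$ avoids all $F$-edges at saturated vertices. Consequently the edge it wishes to delete at an over-saturated vertex is guaranteed to lie outside $T_0$, hence is never a bridge of $H$ relative to the certificate: it is deleted outright, or exchanged against a $T_0$-edge via a cycle exchange, and $|E(H)|$ strictly drops at every step. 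Some structural device of this kind (a tree certificate, or the component-wise structure of $F$ with the matching trick) is what your argument is missing; as written, it does not go through.
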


Moreover, we generalize their result to the following tree-connected version in Section~\ref{sec:tree-connected}, which can guarantee the existence of an $m$-tree-connected $(g,f+f'-m)$-factor using a $(g,f)$-factor and an $m$-tree-connected $(m,f')$-factor. As a consequence, if a graph $G$ contains a $(g,f)$-factor and two edge-disjoint Hamiltonian paths, then it has a $2$-edge-connected $(g,f+2)$-factor.
\begin{thm}\label{intro:thm:tree-connected}
{Let $G$ be a graph. If $G$ contains a $(g,f)$-factor and an $m$-tree-connected $(m,f')$-factor, then it contains an $m$-tree-connected $(g,f+f'-m)$-factor, where let $g$, $f$, $f'$ are nonnegative integer-valued functions on $V(G)$ with $ g\le f$, $m\le f$, and $m\le f'$.
}\end{thm}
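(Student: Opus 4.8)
The plan is to peel the given $m$-tree-connected $(m,f')$-factor into $m$ edge-disjoint spanning trees and then absorb them into the $(g,f)$-factor one tree at a time, each absorption raising the tree-connectivity by one while enlarging the upper degree bound by only the degrees of that tree minus one, so that the cumulative overhead is exactly $-m$. First I normalize the tree-factor: since $H$ is $m$-tree-connected it contains $m$ edge-disjoint spanning trees $T_1,\dots,T_m$, and their union $T_1\cup\cdots\cup T_m$ is still $m$-tree-connected with $m\le d(v)\le d_H(v)\le f'(v)$ at every $v$; replacing $H$ by this union I may assume $\sum_{j=1}^m d_{T_j}(v)=d_H(v)\le f'(v)$.

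The engine is a tree-connectivity-preserving refinement of Theorem~\ref{intro:thm:f'-tree}: if $G$ has a $k$-tree-connected $(g,\hat f)$-factor whose tree-connectivity is witnessed by $k$ edge-disjoint spanning trees, and a spanning $\hat f'$-tree edge-disjoint from them, then $G$ has a $(k+1)$-tree-connected $(g,\hat f+\hat f'-1)$-factor, where $g$ may be nonnegative and $g\le\hat f$. The case $k=0$ is exactly Theorem~\ref{intro:thm:f'-tree} in the refined form announced in the introduction. Granting this, I iterate from the $(g,f)$-factor: writing $\hat f_0=f$ and $\hat f_j=\hat f_{j-1}+d_{T_j}-1$, absorbing $T_j$ produces a $j$-tree-connected $(g,\hat f_j)$-factor. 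The side condition $g\le\hat f_{j-1}$ persists because $\hat f_{j-1}\ge f\ge g$ (each $d_{T_j}\ge1$), and each $T_j$ is a spanning $d_{T_j}$-tree of $G$. After $m$ steps I obtain an $m$-tree-connected $(g,\hat f_m)$-factor with $\hat f_m=f+\sum_{j}d_{T_j}-m=f+d_H-m\le f+f'-m$; the hypotheses $m\le f$ and $m\le f'$ make the window $[g,f+f'-m]$ admit degree at least $m$ everywhere, so nothing obstructs reaching $m$-tree-connectivity.

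To prove the lemma I start from $F_k\cup T$. Because $T$ is edge-disjoint from the $k$ witnessing trees, $F_k\cup T$ is $(k+1)$-tree-connected; its degrees are at most $\hat f+\hat f'$, exceeding the target $\hat f+\hat f'-1$ by exactly one precisely at the vertices where $F_k$ and $T$ are simultaneously saturated with no shared edge. The task is to delete a subgraph meeting each such tight vertex — to recover the single unit of slack — without pushing any degree below $g$ and without destroying any of the $k+1$ edge-disjoint spanning trees. I encode the admissible deletions as a degree-constrained subgraph, equivalently a feasible orientation, on the redundant edges of $F_k\cup T$, and argue solvability by an exchange argument in the graphic matroid: if a tight vertex carries no locally removable edge, a Nash--Williams tight-set analysis supplies an alternating swap rerouting one spanning tree through an $F_k$-edge, which frees a tree-edge there that can then be deleted.

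The main obstacle is exactly this coordinated trimming. Reducing the degree at one tight vertex can saturate or disconnect another, and each swap must keep the $k+1$ spanning trees edge-disjoint and spanning; showing that all tight vertices can be repaired at once — so that degrees stay in $[g,\hat f+\hat f'-1]$ and $(k+1)$-tree-connectivity survives — is the crux, and it is where the nonnegativity of $g$ together with $m\le f$ and $m\le f'$ (which at the $j$-th stage give $j\le\hat f_{j-1}$ with $d_{T_j}\ge1$, hence room both above and below) certify feasibility of the degree-constrained selection. A secondary but genuine point of care is bookkeeping the edge-disjoint packing across the $m$ absorptions, since each newly added tree must truly raise the tree-connectivity rather than duplicate edges already committed to the current packing; I would handle this by restricting every absorption step to the edges of the current factor together with the single new tree, keeping the not-yet-absorbed trees available to serve as genuine new witnesses.
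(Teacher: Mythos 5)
Your reduction defers the entire difficulty to the ``engine'' lemma (absorbing one edge-disjoint spanning tree raises tree-connectivity by one at a degree cost of $d_T-1$), and that lemma is never proved: the sketch in your third paragraph --- encode the trimming as a degree-constrained subgraph and invoke a ``Nash--Williams tight-set analysis'' to reroute trees --- describes the difficulty rather than resolving it, and you say yourself that repairing all tight vertices simultaneously while keeping the $k+1$ trees edge-disjoint and spanning is the crux. This is precisely the content the paper supplies, globally for all $m$ at once, via Lemma~\ref{lem:xGy-exchange} (exchanges inside a \emph{minimal} $m$-tree-connected subgraph) together with an induction on $|E(F)|$ that reduces to the case where $F\setminus E(T)$ is bipartite between the vertices with $d_F\le m$ and those with $d_F\ge m+1$ (Theorems~\ref{thm:4.13} and~\ref{thm:tree-connected:contains}). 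So even granting your outer iteration, the proposal contains no proof of its key step.

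More seriously, the outer iteration itself breaks down. Your engine lemma requires the new tree $T_j$ to be edge-disjoint from the witnessing trees of the current factor $H_{j-1}$, but $H_{j-1}\subseteq F\cup T_1\cup\dots\cup T_{j-1}$ can contain edges of $T_j$, namely those in $E(F)\cap E(T_j)$, and its witnessing trees may be forced to use them. Concretely, take $G\supseteq T_1\cup T_2$ with $T_1,T_2$ edge-disjoint spanning trees, $m=2$, $g=0$, $F=T_2$, $f=\max\{d_{T_2},2\}$, $f'=d_{T_1}+d_{T_2}$; all hypotheses of the theorem hold. Step 1 may legitimately return $H_1=F=T_2$, since it is connected and satisfies $0\le d_{H_1}\le f+d_{T_1}-1$, and then step 2 has nothing to work with: $H_1\cup T_2=T_2$ is a single tree, not $2$-tree-connected, and no subgraph of it is. Repairing this would force the choice made at step $j$ to anticipate all trees yet to come, i.e., a global argument over the whole $m$-tree-connected factor --- which is exactly why the paper never decomposes $T$ into single trees, but instead keeps it intact and performs exchanges that preserve $m$-tree-connectivity throughout. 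Your closing remark (``keeping the not-yet-absorbed trees available to serve as genuine new witnesses'') gestures at this obstacle but does not remove it, since the overlap runs through $F$, which you must retain.
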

%
%
%
%
%
%
%
%
%
%
%
\section{Connected factors with bound degrees}
\label{sec:Matching-Tree}
In this section, we devote a stronger version to Theorem~\ref{intro:thm:f'-tree} as the following result
which provides a relationship between connected factors, matchings, and spanning trees. 
This result was implicitly proved in~\cite{Tokuda-Xu-Wang-2003} for simple graphs and factors $F$ with minimum degree at least one. The proof presented here is a revised version of the proof of Theorem 1 in~\cite{Tokuda-Xu-Wang-2003}.
\begin{thm}\label{thm:F:matching}
{Let $G$ be a graph with a factor $F$. 
Let $M$ be a matching of $F$ having exactly one edge of every non-trivial component of $F$ such that the edge is incident to 
a non-cut vertex of $F$. If $T$ is a spanning tree of $G$, then the factor $F\setminus E(M)$ of $G$ can be extended to a connected factor $H$ 
 such that for each vertex~$v$, 
$$d_F(v)\le d_H(v)\le d_T(v)+\max\{0,d_F(v)-1\}.$$
}\end{thm}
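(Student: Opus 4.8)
The plan is to reduce the statement to a purely additive, per-vertex budgeting problem and then to build $H$ by adjoining a carefully chosen set of tree edges to $F\setminus E(M)$. Writing $r(v)\in\{0,1\}$ for the number of edges of $M$ meeting $v$ and seeking $H=(F\setminus E(M))\cup S$ with $S\subseteq E(T)\setminus E(F)$, we have $d_H(v)=d_F(v)-r(v)+a(v)$, where $a(v)=|S\cap E_G(v)|$ is the number of adjoined tree edges at $v$. A direct computation turns the two displayed inequalities into the equivalent budget constraints: $1\le a(v)\le d_T(v)$ at every vertex covered by $M$; $0\le a(v)\le d_T(v)-1$ at every uncovered vertex lying in a non-trivial component; and $0\le a(v)\le d_T(v)$ at every isolated vertex. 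Since every vertex of the spanning tree $T$ satisfies $d_T(v)\ge 1$, these ranges are always non-empty, so the whole theorem reduces to exhibiting $S\subseteq E(T)$ that meets all these caps and lower bounds while making $(F\setminus E(M))\cup S$ connected.

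The second step is to record the structural consequence of the non-cut hypothesis, which is what keeps the connectivity demand compatible with the tight caps. Let $e_C=u_Cw_C\in M$ with $u_C$ a non-cut vertex of the non-trivial component $C$. Because $C-u_C$ is connected and contains $w_C$, deleting the single edge $e_C$ can detach at most the one vertex $u_C$: if $u_C$ is not a leaf of $C$ it stays attached and $C-e_C$ is connected, while if $u_C$ is a leaf then $C-e_C$ is the connected graph $C-u_C$ together with the isolated vertex $u_C$. Consequently the components of $F\setminus E(M)$ are exactly the connected ``main parts'' (one per component of $F$), possibly augmented by some singletons $\{u_C\}$ and by the isolated vertices of $F$; no deletion ever splits a component into two large pieces. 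This is the property that lets every reconnecting edge be routed to a matching endpoint, so that the unit of slack produced by deleting $e_C$ pays for the edge we must add there.

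The third step is the construction itself, which I would carry out by induction on the number of components of $F\setminus E(M)$ (equivalently, by rooting a spanning tree $\hat\tau$ of the contracted graph $\hat G=G/(F\setminus E(M))$ whose edges come from $T$, and processing its leaves). Starting from $H=F\setminus E(M)$, which already satisfies every upper cap since $d_{F\setminus E(M)}(v)=d_F(v)-r(v)$, I would repeatedly merge a current component into the rest by adjoining one inter-component edge of $T$, always choosing that edge and its attachment vertex so that at a matching endpoint we place the mandatory edge demanded by the lower bound $a\ge 1$, spending exactly the slack freed by deleting $e_C$, and so that at an uncovered non-trivial vertex we never exceed $d_T(v)-1$, which is automatic whenever that vertex still retains a tree edge inside its own part. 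The base case $F$ connected is immediate with $H=F$.

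The main obstacle is exactly the simultaneous fulfilment in step three: guaranteeing that the reconnecting tree edges can always be chosen to respect the caps $a(v)\le d_T(v)-1$ at uncovered non-trivial vertices while still delivering $a(v)\ge1$ at every matching endpoint and preserving connectivity. The danger is an uncovered vertex all of whose $T$-edges leave its part, onto which several reconnecting edges would pile up. The non-cut lemma of step two is what I expect to dissolve this: since a deletion only ever detaches a lone vertex $u_C$, each merge can be realised by an edge incident to a matching endpoint, whose budget was raised to $d_T(v)$ precisely by the deletion of $e_C$, so the surplus is always absorbed by a vertex that has slack rather than by a capped one. Making this routing precise, that is, showing the rooted/inductive choice can always keep every uncovered vertex at or below $d_T(v)-1$, is the one point that will require a genuine argument rather than bookkeeping.
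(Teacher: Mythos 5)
There is a genuine gap here, and you have in fact named it yourself: the ``routing'' argument you defer at the end of step three \emph{is} the theorem, and the mechanism you expect to supply it is false. Consider $G$ the path with edges $ab$, $bc$, $cz$; let $F$ consist of $ab$ and $bc$ (so $z$ is isolated in $F$), let $M=\{ab\}$ (legitimate, since $a$ is a non-cut vertex of $F$), and let $T=G$. The components of $F\setminus E(M)$ are $\{a\}$, $\{b,c\}$, $\{z\}$, and the only edge of $T$ that can merge $\{z\}$ into the rest is $cz$, which is incident to \emph{no} matching endpoint: $c$ is uncovered and $z$ is isolated in $F$. The merge is affordable not because of slack created at a matching endpoint, but because $c$ happens to retain an internal tree edge ($bc$), so that $a(c)=1\le d_T(c)-1$; showing that such slack is \emph{always} available where it is needed is a nontrivial global fact about $T$ and $F$ that your sketch never establishes, and it is exactly the crux. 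The same example also breaks your ansatz $S\subseteq E(T)\setminus E(F)$: here $E(T)\setminus E(F)=\{cz\}$, so no admissible $S$ can connect $a$ at all, yet the theorem's conclusion holds (with $H=G$). Connecting $\{a\}$ forces you to re-add the matching edge $ab\in E(F)\cap E(T)$, which your set-up forbids; this is repairable (allow $S\subseteq (E(T)\cup E(M))\setminus E(F\setminus E(M))$, under which the same per-vertex budgets survive), but as written the reduction in step one is not even feasible.

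It is worth seeing how the paper sidesteps the difficulty entirely: it works top-down rather than bottom-up. Assuming $G=T\cup F$, it considers pairs $(H,T_0)$ in which $H$ is a \emph{connected} factor containing $E(F)\setminus E(M)$ and $T_0$ is a spanning tree of $H$ satisfying two invariants relating $E_{T_0}(v)$ to $E_F(v)$; the pair $(G,T)$ qualifies, so such pairs exist. It then takes $H$ with $|E(H)|$ minimum, and among those with $|E(H)\cap E(M)|$ maximum, and proves by two exchange arguments (using paths inside components of $F$ and the non-cut hypothesis) that this extremal $H$ satisfies the upper bounds (Claim A) and the lower bounds (Claim B). Since $H$ only ever shrinks from a graph that is already connected, with $T_0$ maintained as a connectivity certificate through each exchange, the simultaneous-satisfaction problem that blocks your construction never arises. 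To salvage your approach you would need a direct feasibility proof for the budgeted connector problem --- for instance an exchange argument that repairs an overloaded uncovered vertex by swapping a crossing edge for one inside its part --- and carrying that out essentially reproduces the paper's proof in disguise.
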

\begin{proof}
{We may assume that $G=T\cup F$. 
Set $M=\{x_1y_1,\ldots, x_ty_t\}$, $X=\{x_i:1\le i\le t\}$, and $Y=\{y_i:1\le i\le t\}$.
Assume that each $x_i\in X$ is not a cut vertex in $F$.
For a graph $G'$ and a vertex $v$, we denote by $E_{G'}(v)$ the set of edges of $G'$ that are incident to $v$.
Let $\mathcal{A}$ be the set of all $2$-tuples $(H,T_0)$ such that 
 $H$ is a connected factor of $G$ containing $E(F)\setminus E(M)$ and $T_0$ is a spanning tree of $H$ in which
for each $v\in V(H)$ the following condition holds:
if (i) $d_H(v)=d_T(v)+d_F(v)$ or (ii) $v=x_i$ and $x_iy_i\notin E(H)$, 
then $E_{T_0}(v)\cap E_{F}(v)=\emptyset$.
Note that $(G,T)\in \mathcal{A}$.
Define $\mathcal{A}_0$ to be the set of all $2$-tuples $(H,T_0)\in\mathcal{A} $ with the minimum $|E(H)|$.
 Now, we prove the following claim.
%
\vspace{2mm}\\
{\bf Claim A.} If $(H,T_0)\in \mathcal{A}_0$, then 
for each vertex $v$, $d_H(v)\le d_T(v)+\max\{0,d_F(v)-1\}$.
\vspace{2mm}\\
{\bf Proof of Claim A.} 
Suppose, to the contrary, that
 $d_H(u)=d_T(u)+d_F(u)$ and $d_F(u)>0$, for some vertex $u$.
Let $C$ be the non-trivial component of $F$ containing $u$ and let 
$x_iy_i$ be the single edge in $E(C) \cap E(M)$.
For convenience, let us set $x=x_i$ and $y=y_i$.
First, assume that $d_H(v)=d_T(v)+d_F(v)$, for all $v\in V(C)$.
Since $d_H(x)=d_G(x)$, we have $xy\in E(H)$ and 
 also item (i) implies that $E_{T_0}(x)\cap E_F(x)=\emptyset$ and $xy\notin E(T_0)$.
Define $H'=H-xy$ and $T'_0=T_0$. 
Note that $H'$ contains $E(F)\setminus E(M)$, 
$d_{H'}(y)<d_G(y)\le d_T(y)+d_F(y)$, and
$E_{T'_0}(x)\cap E_F(x)=\emptyset$.
It is easy check that $(H',T'_0)\in \mathcal{A}_0$.
Since $|E(H')|=|E(H)|-1$, we arrive at a contradiction.
Now, assume that $d_H(w)< d_T(w)+d_F(w)$, for some vertex $w$ in $V(C)$.
If $xy\in E(H)$, we take $ab$ to be an edge on 
a $wu$-path
 in the connected graph $C$ such that 
$d_H(a)<d_T(a)+d_F(a)$ and $d_H(b)=d_T(b)+d_F(b)$.
If $xy\notin E(H)$, then $d_H(y)<d_G(y)\le d_{T}(y)+d_F(y)$ and also $x\neq u$. 
In this case, we can take $ab$ to be an edge on 
a $yu$-path in the connected graph $C-x$ such that 
$d_H(a)<d_T(a)+d_F(a)$ and $d_H(b)=d_T(b)+d_F(b)$.
In both cases, since $d_H(b)=d_G(b)$, we must have $ab\in E(H)$
and also item (i) implies that $E_{T_0}(b)\cap E_F(b)=\emptyset$ and $ab\notin E(T_0)$. 
Thus there is an edge $bc\in E(T_0)$ such that $T_0-bc +ab$ is connected (we might have $b=c$). 
Define $H'=H-bc$ and $T'_0=T_0-bc+ab$. 
Since $E_{T_0}(b)\cap E_F(b)=\emptyset$, we have $bc\notin E(F)$ and so the graph $H'$ contains $E(F)\setminus E(M)$. 
Moreover, $d_{H'}(a) \le d_H(a)< d_T(a)+d_F(a)$, $d_{H'}(b) < d_G(b)\le d_T(b)+d_F(b)$, 
 and $d_{H'}(c) < d_G(c)\le d_T(c)+d_F(c)$.
According to the construction, if $xy\notin E(H')$ then we must have $xy\notin E(H)$ and $x\notin \{a,b\}$;
moreover, by item (ii), $E_{T_0}(x)\cap E_{F}(x)=\emptyset$ and so $E_{T'_0}(x)\cap E_{F}(x)=\emptyset$.
It is easy to check that $(H',T'_0)\in \mathcal{A}_0$.
Since $|E(H')|= |E(H)|-1$, we again arrive at a contradiction and so the claim holds.
\vspace{2mm}\\
{\bf Claim B.} If $(H,T_0)$ is a $2$-tuple of $\mathcal{A}_0$ with the maximum $|E(H)\cap E(M)|$, then 
for all vertices $v$, $d_H(v)\ge d_F(v)$.
\vspace{2mm}\\
{\bf Proof of Claim B.}
Suppose, to the contrary, that
 $d_H(u)<d_F(u)$ for some vertex $u$.
Hence $u$ must be incident to the edges in $E(M)\setminus E(H)$.
We may assume that $u\in \{x_i, y_i\}$ and $x_iy_i\notin E(H)$, and also $d_H(u)= d_F(u)-1$.
If $u=x_i \in X$, then by item (ii), $E_{T_0}(u)\cap E_{F}(u)=\emptyset$ and so
 $d_H(u)\ge d_{F\setminus E(M)}(u)+d_{T_0}(u)\ge d_F(u)$.
Thus $u=y_i\in Y$.
For convenience, let us again set $x=x_i$ and $y=y_i$. 
Since $xy\not\in E(T_0)$, there is an edge $xz\in E(T_0)$ such that $T_0-xz+xy$ is connected. 
Note that we must have $y\neq z$. 
Define $H'=H-xz+xy$ and $T'_0=T_0-xz+xy$.
By item (ii), $E_{T_0}(x)\cap E_F(x)=\emptyset$, and so $xz\notin E(F)$.
This implies that $H'$ contains $E(F)\setminus E(M)$.
Moreover, $d_{H'}(x)<d_G(x)\le d_T(x)+d_F(x)$,
 $d_{H'}(z)<d_G(z)\le d_T(z)+d_F(z)$, and $d_{H'}(y)=d_F(y)<d_T(y)+d_F(y)$, and also
 $xy\in E(H')$.
It is easy check that $(H',T'_0)\in \mathcal{A}_0$. 
Since $|E(H')|= |E(H)|$ and $|E(H')\cap E(M)|=|E(H)\cap E(M)|+1$, we arrive at a contradiction. Hence the claim holds.

Since $\mathcal{A}_0$ is not empty, we can consider $(H,T')$ as a $2$-tuple of $\mathcal{A}_0$ 
with the maximum $|E(H)\cap E(M)|$.
By Claims A and B, the graph $H$ is the desired connected factor we are looking for.
}\end{proof}
The following corollary is a consequence of Lemma 4.1 in~\cite{ClosedWalks} and 
can be proved by Theorem~\ref{thm:F:matching}
with a little extra effort.
\begin{cor}{\rm (\cite{ClosedWalks})}\label{cor:matching:connected-factors}
{If every matching of a graph $G$ can be extended to a spanning $f'$-tree, then
every $(g,f)$-factor can also be extended to a connected $(g,f+f'-1)$-factor, where
$g$ is a nonnegative integer-valued function on $V(G)$, and $f$ and $f'$ are positive integer-valued functions on $V(G)$.
}\end{cor}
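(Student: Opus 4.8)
The plan is to deduce the corollary from a single, carefully prepared application of Theorem~\ref{thm:F:matching}. Fix a $(g,f)$-factor $F$ of $G$. The key idea is to build the matching required by Theorem~\ref{thm:F:matching} directly from the component structure of $F$, and only afterwards to call on the hypothesis to produce a spanning tree compatible with it. Concretely, for each non-trivial component $C$ of $F$ (one having at least one edge), I would first choose a non-cut vertex $u_C$ of $C$; this is possible because every connected graph on at least two vertices has at least two non-cut vertices, for example the two leaves of any of its spanning trees, and such a leaf is also a non-cut vertex of the ambient graph. Since deleting $u_C$ affects only $C$, it is also a non-cut vertex of $F$, and since $C$ is connected on at least two vertices, $u_C$ has an incident edge $e_C$ in $F$. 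Setting $M=\{e_C : C \text{ a non-trivial component of } F\}$ yields, by construction, a matching of $F$ containing exactly one edge of each non-trivial component, each such edge being incident to a non-cut vertex of $F$.

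Because $F$ is a subgraph of $G$, the set $M$ is also a matching of $G$, so the hypothesis applies and furnishes a spanning $f'$-tree $T$ of $G$ with $M\subseteq E(T)$ and $d_T(v)\le f'(v)$ for every vertex $v$. I would then apply Theorem~\ref{thm:F:matching} inside the spanning subgraph $T\cup F$, with this $F$, $M$, and $T$, to obtain a connected factor $H$ satisfying
$$d_F(v)\le d_H(v)\le d_T(v)+\max\{0,d_F(v)-1\}$$
for every vertex $v$. As $H$ is spanning and $H\subseteq T\cup F\subseteq G$, it is a connected factor of $G$ whose degrees dominate those of $F$.

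It remains to check the two target bounds. For the lower bound, $d_H(v)\ge d_F(v)\ge g(v)$ since $F$ is a $(g,f)$-factor; this inequality is unaffected by allowing $g(v)=0$, which is exactly why $g$ may be taken nonnegative here. For the upper bound, $d_T(v)\le f'(v)$ and $d_F(v)\le f(v)$, and since $f(v)\ge 1$ we have $\max\{0,d_F(v)-1\}\le f(v)-1$, whence $d_H(v)\le f'(v)+f(v)-1$. Thus $g(v)\le d_H(v)\le f(v)+f'(v)-1$ for all $v$, so $H$ is the desired connected $(g,f+f'-1)$-factor extending $F$. I expect the only step needing real care to be the construction of $M$---simultaneously guaranteeing that it is a matching and that it places an edge at a non-cut vertex in every non-trivial component---since the remaining degree bookkeeping that turns the $\max\{0,d_F(v)-1\}$ term into the clean bound $f+f'-1$ is routine once $f(v)\ge 1$ is invoked.
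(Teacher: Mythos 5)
There is a genuine gap at the very last step: you conclude that $H$ is a connected $(g,f+f'-1)$-factor \emph{extending} $F$, but nothing in your argument gives $F\subseteq H$. Theorem~\ref{thm:F:matching} only guarantees that $H$ contains $E(F)\setminus E(M)$, together with the degree bounds $d_F(v)\le d_H(v)\le d_T(v)+\max\{0,d_F(v)-1\}$; the matching edges of $M$ may well be absent from $H$, since a degree lower bound does not force edge containment (the missing matching edge at a vertex can be compensated by tree edges). The extension property is exactly what distinguishes this corollary from the paper's next one, so it cannot be waved through. A telltale sign of the gap is that your argument never actually uses the containment $M\subseteq E(T)$: Theorem~\ref{thm:F:matching} works for an arbitrary spanning tree, so your proof goes through verbatim under the weaker hypothesis that $G$ merely has a spanning $f'$-tree --- and what it then proves is precisely the existence statement of the paper's second corollary (the Tokuda--Xu--Wang version), not the extension statement claimed here.

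The paper closes the gap by adding the matching back: it applies Theorem~\ref{thm:F:matching} to get a factor $H_0$ containing $E(F)\setminus E(M)$ and then sets $H=H_0\cup M$. This $H$ contains all of $F$ and is still connected, and the degree bound survives precisely because $E(M)\subseteq E(T)\cap E(F)$: for a vertex $v$ covered by $M$, the inclusion $H\subseteq T\cup F$ gives $d_H(v)\le d_T(v)+d_F(v)-d_M(v)=d_T(v)+d_F(v)-1\le f'(v)+f(v)-1$, while for $v$ not covered by $M$ the degree is unchanged from $H_0$, so your computation applies. This add-back step is the only place where the hypothesis that every matching extends into a spanning $f'$-tree is genuinely used. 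The rest of your proposal --- the construction of $M$ from non-cut vertices of the non-trivial components of $F$, and the bookkeeping turning $\max\{0,d_F(v)-1\}$ into $f(v)+f'(v)-1$ --- matches the paper's proof.
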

\begin{proof}
{Let $F$ be a $(g,f)$-factor of $G$ and consider $M$ as an arbitrary matching of $F$ 
satisfying the assumption of Theorem~\ref{thm:F:matching}.
Note that every non-trivial component of $F$ has a non-cut vertex.
By the assumption, the graph $G$ has a spanning $f'$-tree $T$ containing $M$.
Thus Theorem~\ref{thm:F:matching} implies that $T\cup F$ has a connected factor $H_0$ containing $E(F)\setminus E(M)$ such that for each vertex $v$, $d_{F}(v)\le d_{H_0}(v)\le d_T(v)+\max\{0,d_{F}(v)-1\}$. Define $H=H_0\cup M$. 
According to this definition, $H$ contains $F$ and is connected as well.
 Let $v\in V(H)$. If $v$ is not incident to the edges of $M$, 
then we still have $ d_{H}(v)=d_{H_0}(v) \le d_T(v)+\max\{0,d_{F}(v)-1\}$. Otherwise, since $E(M)\subseteq E(T)\cap E(F)$, we must automatically have $ d_{H}(v) \le d_T(v)+d_F(v)-d_M(v)=d_T(v)+d_F(v)-1$. 
Hence $ H$ is the desired connected $(g,f+f'-1)$-factor we are looking for.
}\end{proof}
 Tokuda, Xu, and Wang~\cite{Tokuda-Xu-Wang-2003} discovered the following result, when $g'$ is a positive function.
\begin{cor}
{Let $G$ be a graph. If $G$ contains a $(g,f)$-factor and a spanning $f'$-tree, then $G$ has a connected $(g,f+f'-1)$-factor, where $g$ is a nonnegative integer-valued function on $V(G)$, and $f$ and $f'$ are positive integer-valued functions on $V(G)$.
}\end{cor}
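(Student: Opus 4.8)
The plan is to derive this statement directly from Theorem~\ref{thm:F:matching}, applied to the given $(g,f)$-factor and spanning $f'$-tree. The key point that makes $g$ allowed to be merely nonnegative is that here we need only the \emph{existence} of a connected $(g,f+f'-1)$-factor and not, as in Corollary~\ref{cor:matching:connected-factors}, that it contain the original factor; consequently no edge of the matching has to be reinserted, and the degree-zero vertices of $F$ create no difficulty.

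First I would fix a $(g,f)$-factor $F$ of $G$ and a spanning $f'$-tree $T$ of $G$. Since every non-trivial component of $F$ is a connected graph on at least two vertices, it possesses a non-cut vertex, so I can select a matching $M$ consisting of exactly one edge from each non-trivial component of $F$, each edge chosen incident to a non-cut vertex of its component. This is precisely the hypothesis required to invoke Theorem~\ref{thm:F:matching} on $F$, $M$, and $T$.

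Applying that theorem yields a connected factor $H$ of $G$ containing $E(F)\setminus E(M)$ such that for every vertex $v$,
\[
d_F(v)\le d_H(v)\le d_T(v)+\max\{0,d_F(v)-1\}.
\]
It then remains to verify the two degree constraints. For the lower bound, $d_H(v)\ge d_F(v)\ge g(v)$, which settles the $(g,\cdot)$ requirement even at vertices with $g(v)=0$, namely the isolated vertices of $F$, which $M$ leaves untouched. For the upper bound, using $d_T(v)\le f'(v)$ together with $f(v)\ge 1$, I obtain $d_H(v)\le f'(v)+\max\{0,f(v)-1\}=f(v)+f'(v)-1$. Hence $H$ is a connected $(g,f+f'-1)$-factor, as claimed.

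The step that carries the whole argument is the observation that the reinsertion of $M$ performed in Corollary~\ref{cor:matching:connected-factors} is unnecessary for this weaker conclusion; once that is noticed, the degree bookkeeping is routine. The presence of degree-zero vertices of $F$, which is exactly what allows $g$ to be nonnegative rather than positive, causes no obstruction, since such vertices automatically satisfy $g(v)=0\le d_H(v)$ while their upper bound $d_H(v)\le d_T(v)\le f'(v)\le f(v)+f'(v)-1$ only improves.
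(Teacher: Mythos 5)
Your proof is correct and takes essentially the same route as the paper's own proof: pick a matching $M$ with exactly one edge from each non-trivial component of $F$, incident to a non-cut vertex (which exists, e.g., a leaf of a spanning tree of that component), apply Theorem~\ref{thm:F:matching} to $F$, $M$, and $T$, and read off $g(v)\le d_F(v)\le d_H(v)\le d_T(v)+\max\{0,d_F(v)-1\}\le f'(v)+\max\{0,f(v)-1\}=f(v)+f'(v)-1$. (One side remark is slightly off: the fact that $H$ need not contain $F$ is not really what permits $g$ to be nonnegative—Corollary~\ref{cor:matching:connected-factors} also allows nonnegative $g$ while reinserting $M$; the lower bound $d_H(v)\ge d_F(v)\ge g(v)$ from Theorem~\ref{thm:F:matching} is what does the work in both cases—but this does not affect the validity of your argument.)
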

\begin{proof}
{Let $F$ be a $(g,f)$-factor and let $T$ be a spanning $f'$-tree of $G$.
Consider $M$ as an arbitrary matching of $F$ satisfying the assumption of Theorem~\ref{thm:F:matching}.
Note that every non-trivial component of $F$ has a non-cut vertex. For finding such a vertex, it is enough to select a vertex with degree one from a spanning tree of that component. 
Now, by Theorem~\ref{thm:F:matching}, the graph $G$ has a connected factor $H$ such that for each vertex $v$, $g'(v) \le d_F(v) \le d_H(v)\le d_T(v)+\max\{0,d_F(v)-1\}\le f(v)+\max\{0,f'(v)-1\}=f(v)+f'(v)-1$.
Hence the assertion holds.
}\end{proof}
\section{Highly tree-connected factors with bound degrees}
\label{sec:tree-connected}
In this section, we first devote a similar tree-connected version to Theorem~\ref{thm:F:matching} in order to prove Theorem~\ref{intro:thm:tree-connected}. For this purpose, we need the following lemma which is a useful tool for finding a pair of edges such that replacing them preserves tree-connectivity of a given tree-connected factor (this is a basic tool in matroid theory~\cite{Edmonds-1970}). 
\begin{lem}{\rm (see \cite{ClosedTrails})}\label{lem:xGy-exchange}
{Let $G$ be graph with an $m$-tree-connected factor $H$ and let $xy\in E(G)\setminus E(H)$. If $Q$ is a minimal $m$-tree-connected subgraph of $G$ including $x$ and $y$, then for every $e \in E(Q)$, the graph $H-e+xy$ remains $m$-tree-connected.
}\end{lem}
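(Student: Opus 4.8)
The plan is to recast $m$-tree-connectedness in the language of the matroid union and then to invoke the fundamental-circuit exchange property. Let $n=|V(G)|$ and let $\mathcal{M}$ be the union of $m$ copies of the cycle matroid of $G$, so that a set of edges is independent in $\mathcal{M}$ exactly when it decomposes into $m$ forests; by the Nash--Williams--Tutte theorem a spanning subgraph $K$ is $m$-tree-connected precisely when $E(K)$ is a spanning set of $\mathcal{M}$, i.e.\ $r_{\mathcal{M}}(E(K))=m(n-1)$. Since $G\supseteq H$ is itself $m$-tree-connected, the whole ground set has rank $m(n-1)$, and because $E(H)$ is already spanning we have $\operatorname{cl}_{\mathcal{M}}(E(H))=E(G)$. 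It therefore suffices to show that $(E(H)\setminus\{e\})\cup\{xy\}$ is again spanning in $\mathcal{M}$. The case $e\notin E(H)$ is immediate, since then $H-e+xy=H+xy$ contains the $m$-tree-connected graph $H$, so I would assume $e\in E(H)$.

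The heart of the argument is to identify $E(Q)\cup\{xy\}$ as a circuit of $\mathcal{M}$. As $Q$ is a minimal $m$-tree-connected subgraph through $x$ and $y$, it packs exactly $m$ spanning trees of $V(Q)$, so $E(Q)$ is independent with $|E(Q)|=m(|V(Q)|-1)$; adjoining the chord $xy$, whose ends already lie in $V(Q)$, yields a set of size $m(|V(Q)|-1)+1$ on $|V(Q)|$ vertices, which is overfull and hence dependent. For minimality of this dependency I would argue that any proper circuit $C\subseteq E(Q)\cup\{xy\}$ must contain $xy$ (because $E(Q)$ is independent), in which case $C\setminus\{xy\}$ is an independent spanning set of $V(C)$ of the right cardinality, that is, an $m$-tree-connected subgraph through $x$ and $y$ lying properly inside $Q$, contradicting minimality. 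Thus $E(Q)\cup\{xy\}$ is a circuit containing both $e$ and $xy$.

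Finally I would use that $e$ and $xy$ lie on this common circuit. The circuit axioms give $e\in\operatorname{cl}_{\mathcal{M}}\bigl((E(Q)\setminus\{e\})\cup\{xy\}\bigr)$, and since the remaining edges of $Q$ lie in $H$ — this is the one place the factor hypothesis is used, namely that $Q$ may be taken inside $H$ as the fundamental circuit of $xy$ with respect to a spanning-tree packing of $H$ — this set is contained in $(E(H)\setminus\{e\})\cup\{xy\}$, so $e$ lies in the closure of the latter. That flat then contains $(E(H)\setminus\{e\})\cup\{e\}=E(H)$, hence contains $\operatorname{cl}_{\mathcal{M}}(E(H))=E(G)$ and has full rank $m(n-1)$; therefore $(E(H)\setminus\{e\})\cup\{xy\}$ is spanning and $H-e+xy$ is $m$-tree-connected. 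I expect the main obstacle to be the middle paragraph — establishing that $E(Q)\cup\{xy\}$ is exactly a circuit — because this is where minimality of $Q$ and the union-matroid rank formula (equivalently, a direct Nash--Williams--Tutte partition computation) must be combined; the two framing reductions and the final closure computation are then routine.
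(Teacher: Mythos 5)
A preliminary remark: the paper never proves this lemma --- it is quoted from \cite{ClosedTrails} --- so there is no internal argument to compare yours against, and your proposal has to be judged on its own terms. Your matroid-union framing is the natural (and standard) one, and the step you flagged as the main obstacle is indeed fillable exactly along the lines you indicate. For any circuit $C$ of the $m$-fold union $\mathcal{M}$ of the cycle matroid (graphic rank $r$), the minimizing set in the union rank formula $r_{\mathcal{M}}(C)=\min_{S\subseteq C}\bigl(|C\setminus S|+m\,r(S)\bigr)$ must be $C$ itself: a proper minimizer $S$ would have to satisfy $m\,r(S)=|S|-1$ because $C$ is a circuit, yet being a proper subset of a circuit it is independent, forcing $|S|\le m\,r(S)$. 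Hence $|C|=m\,r(C)+1$; if $C$ were disconnected, each component would be a proper (hence independent) subset, and summing $|C_i|\le m\,r(C_i)$ over components would give $|C|\le m\,r(C)$, a contradiction, so $C$ is connected and $|C|=m(|V(C)|-1)+1$. Deleting any single edge then leaves an independent set of exactly $m(|V(C)|-1)$ edges, whose $m$ constituent forests are forced to be spanning trees of $V(C)$. Applied to a hypothetical circuit properly contained in $E(Q)\cup\{xy\}$ and containing $xy$, this produces precisely the $m$-tree-connected proper subgraph of $Q$ through $x$ and $y$ that contradicts the minimality of $Q$, as you wanted.

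The genuine gap is in your final paragraph, in the words ``$Q$ may be taken inside $H$.'' In the lemma $Q$ is given, not chosen: the conclusion is asserted for every minimal $m$-tree-connected subgraph $Q$ of $G$ through $x$ and $y$ and for every $e\in E(Q)$, while your closure step needs $E(Q)\setminus\{e\}\subseteq E(H)$, which such a $Q$ need not satisfy. In fact the statement as literally written is false. Take $m=1$, $V(G)=\{x,y,z,w\}$, $E(H)=\{xz,zy,yw\}$ (a Hamiltonian path, hence a $1$-tree-connected factor), and $E(G)=E(H)\cup\{xy,xw\}$; the path with edges $xw,wy$ is a minimal connected subgraph of $G$ containing $x$ and $y$, yet for $e=wy$ the factor $H-e+xy$ isolates $w$ and is disconnected. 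So the hypothesis has to be read as ``$Q$ is a minimal $m$-tree-connected subgraph of $H$,'' which is how the lemma is actually invoked in both applications inside the proof of Theorem~\ref{thm:4.13} (there $Q$ is always taken inside $T_0$ or $H$), and is presumably the form proved in \cite{ClosedTrails}. Under that corrected reading your argument is complete and correct once the circuit computation above is written out; but you should present $Q\subseteq H$ as a necessary repair of the stated hypothesis --- no proof can rescue the version with $Q\subseteq G$ --- rather than as a choice you are free to make.
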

The following theorem is essential in this section.
\begin{thm}\label{thm:4.13}
{Let $G$ be a graph with a factor $F$ and an $m$-tree-connected factor $T$. If $F\setminus E(T)$ is a bipartite graph with one partite set $A=\{v\in V(G):d_F(v)\le m\}$, 
then $G$ has an $m$-tree-connected factor $H$ containing $F\setminus E(M)$ such that for each vertex $v$, 
$$d_F(v)\le d_H(v)\le d_T(v)+\max\{0,d_F(v)-m\},$$
provided that $M$ is a bipartite factor of $F\setminus E(T)$ such that for each $v\in V(G)\setminus A$, 
$d_M(v)+d_{F\cap T}(v)\ge m$.
}\end{thm}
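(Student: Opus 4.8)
The plan is to run the proof of Theorem~\ref{thm:F:matching} almost verbatim, upgrading every occurrence of ``spanning tree'' to ``$m$-tree-connected spanning subgraph'' and replacing each single-edge tree swap ``$T_0-bc+ab$ is connected'' by an application of the exchange Lemma~\ref{lem:xGy-exchange}. First I would assume $G=T\cup F$ and introduce the family $\mathcal{A}$ of all pairs $(H,T_0)$ in which $H$ is an $m$-tree-connected factor of $G$ with $E(F)\setminus E(M)\subseteq E(H)$ and $T_0$ is an $m$-tree-connected spanning subgraph of $H$ subject to the guarding condition that, for every vertex $v$, if either (i) $d_H(v)=d_T(v)+\max\{0,d_F(v)-m\}$ or (ii) $v$ is an $A$-endpoint of some $M$-edge missing from $H$, then $E_{T_0}(v)\cap E_{F\setminus E(T)}(v)=\emptyset$. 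The key point in choosing $E_{F\setminus E(T)}$ rather than $E_F$ is that it makes $(G,T)\in\mathcal{A}$ hold trivially: since $T\setminus F$ and $F\setminus T$ are edge-disjoint, $E_T(v)\cap E_{F\setminus E(T)}(v)=\emptyset$ for all $v$, and $T$ is $m$-tree-connected with $d_T(v)\ge m$. This is precisely the adjustment that absorbs the new phenomenon absent from Theorem~\ref{thm:F:matching}, namely that $T$ and $F$ may now share edges.

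Next I would let $\mathcal{A}_0$ consist of the pairs minimizing $|E(H)|$ and prove the upper-bound analogue of Claim~A: in every $(H,T_0)\in\mathcal{A}_0$ one has $d_H(v)\le d_T(v)+\max\{0,d_F(v)-m\}$. Assuming a saturated vertex $u$ exists, I would locate, exactly as in Theorem~\ref{thm:F:matching}, an edge $ab\in E(H)$ lying on a suitable path inside the bipartite graph $F\setminus E(T)$ with $b$ saturated, so that item (i) forces $E_{T_0}(b)\cap E_{F\setminus E(T)}(b)=\emptyset$ and hence $ab\notin E(T_0)$. Here the single-edge swap is replaced by taking a minimal $m$-tree-connected subgraph $Q$ of $T_0+ab$ containing $a$ and $b$ and deleting a well-chosen $e\in E(Q)$; Lemma~\ref{lem:xGy-exchange} guarantees $H-e+ab'$ stays $m$-tree-connected, and the guarding condition ensures $e\notin E(F\setminus E(T))$ so that $E(F)\setminus E(M)$ is preserved. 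Reducing $|E(H)|$ by one then contradicts minimality.

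Finally I would take $(H,T_0)\in\mathcal{A}_0$ maximizing $|E(H)\cap E(M)|$ and establish the lower-bound analogue of Claim~B, $d_H(v)\ge d_F(v)$, splitting on the bipartition. For $v\in A$ the bound is automatic because $H$ is $m$-tree-connected, so $d_H(v)\ge m\ge d_F(v)$; this is why the low-degree side needs no extra argument and why $A=\{v:d_F(v)\le m\}$ is the correct partite set. For $v\notin A$ a deficiency $d_H(v)<d_F(v)$ must come from an $M$-edge $xy\notin E(H)$ with $x\in A$, and I would reinsert it via Lemma~\ref{lem:xGy-exchange}, swapping against an edge of $T_0$ at $x$ licensed by condition (ii); the hypothesis $d_M(v)+d_{F\cap T}(v)\ge m$ is exactly what certifies that the $F\cap T$ edges already in $H$ together with the recoverable $M$-edges restore degree $d_F(v)$ at $v$. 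I expect the main obstacle to be the bookkeeping of shared edges $F\cap T$ throughout the exchanges: one must verify at each swap that the deleted edge lies outside $E(F)\setminus E(M)$ and that the guarding condition is inherited by the new pair, so that $(H',T_0')\in\mathcal{A}_0$ and the extremality (minimum $|E(H)|$, then maximum $|E(H)\cap E(M)|$) is genuinely contradicted.
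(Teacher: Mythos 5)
You are attempting to transplant the extremal-pair machinery of Theorem~\ref{thm:F:matching} (one family of pairs $(H,T_0)$, minimize $|E(H)|$, then maximize $|E(H)\cap E(M)|$), but this is not what the paper does for Theorem~\ref{thm:4.13}, and as written your sketch has a genuine gap centered on your weakened guard. You require only $E_{T_0}(v)\cap E_{F\setminus E(T)}(v)=\emptyset$ and then assert that the deleted edge $e\in E(Q)\subseteq E(T_0)$ satisfies ``$e\notin E(F\setminus E(T))$, so that $E(F)\setminus E(M)$ is preserved.'' That is a non sequitur: since $M$ is a factor of $F\setminus E(T)$, the set $E(F)\setminus E(M)$ contains all of $E(F\cap T)$, and your guard says nothing about edges of $F\cap T$ lying in $T_0$; the deleted edge may well be such an edge, and then $H-e$ no longer contains $F\setminus E(M)$. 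The paper avoids exactly this shared-edge problem by imposing the \emph{opposite} requirement, its condition (iii) $E_{F\cap T}(v)\subseteq E_{T_0}(v)$, and by never using a guard to choose the deleted edge: every exchange pivots at a vertex $v\in A$ and picks $vy\in E(Q)\setminus E(F)$ outright, which exists by the count $d_Q(v)\ge m\ge d_F(v)$ (valid because $Q$ is $m$-edge-connected and the edge $vx\in E(F)$ being swapped in is not in $E(Q)$).

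Your Claim~A descent also does not transfer ``exactly as in Theorem~\ref{thm:F:matching}.'' There, the contradiction hypothesis $d_H(u)=d_T(u)+d_F(u)$ forces $d_H(u)=d_G(u)$, i.e.\ \emph{every} edge of $G$ at a saturated vertex lies in $H$; this is what forces $xy\in E(H)$ and the transition edge $ab\in E(H)$, and the path argument lives inside a connected component of $F$ carrying exactly one $M$-edge at a non-cut vertex. In Theorem~\ref{thm:4.13} a violation of the bound $d_T(v)+\max\{0,d_F(v)-m\}$ leaves $d_H(v)$ in general strictly below $d_{G_0}(v)$, so saturation forces nothing about membership of particular edges in $H$; moreover $M$ is now an arbitrary bipartite factor of $F\setminus E(T)$, with possibly many edges per component and no non-cut-vertex structure, so there is no analogue of the $wu$-path or of the single edge $x_iy_i$, and you supply no replacement. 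Relatedly, the hypothesis $d_M(v)+d_{F\cap T}(v)\ge m$ for $v\in B$ does no actual work in your sketch (its claimed role in your Claim~B is not an argument); in the paper it is used precisely once, at the outset, to certify that $T$ is a legitimate initial candidate for $T_0$, via $d_{T\cup F'}(v)=d_T(v)+d_F(v)-d_M(v)-d_{F\cap T}(v)\le d_T(v)+d_F(v)-m$. The paper's proof is a different two-stage construction: first choose an $m$-tree-connected factor $T_0$ of $G_0\setminus E(M)$ maximizing $|E(T_0)\cap E(F)|$ and prove that every edge of $F\setminus E(M)$ at an $A$-vertex lies in $T_0$ (so $d_{T_0\cup F'}(v)\le d_T(v)$ on $A$); then choose $H\supseteq F\setminus E(M)$, with $T_0\cup F'$ as candidate, maximizing $|E(H)\cap E(M)|$ to force $d_H\ge d_F$. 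There is no minimization of $|E(H)|$ and no guard condition anywhere; you would need to rebuild your argument along these lines rather than patch Theorem~\ref{thm:F:matching}'s proof.
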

\begin{proof}
{Let $B=\{v\in V(G):d_F(v)\ge m+1\}=V(G)\setminus A$. Define $G_0=T\cup F$ and $F'=F\setminus E(M)$.
Let $T_0$ be an $m$-tree-connected factor of $G_0\setminus E(M)$ such that 
(i) for all $v\in A$, $d_{T_0}(v)\le d_T(v)$, and 
(ii) for all $v\in B$, $d_{T_0\cup F'}(v)\le d_T(v)+d_F(v)-m$,
and (iii) for all $v\in V(G)$, $E_{F\cap T}(v)\subseteq E_{T_0}(v)$.
Note that $T$ is a natural candidate for $T_0$,
since for each $v\in B$, by the assumption, we have
$d_{T\cup F'}(v)= d_{T}(v)+d_{F'}(v)-d_{F'\cap T}(v)=d_{T}(v)+(d_{F}(v)-d_M(v))-d_{F\cap T}(v)\le d_T(v)+d_F(v)-m$.
Consider $T_0$ with the maximum $|E(T_0)\cap E(F)|$. 
Now, are ready to prove the following claim.
\vspace{2mm}\\
{\bf Claim.} 
If $vx\in E(F)\setminus E(M)$ and $v\in A$, then $vx\in E(T_0)$.
\vspace{2mm}\\
{\bf Proof of Claim.} 
Suppose, to the contrary, that there is an edge $vx\in E(F)\setminus (E(M)\cup E(T_0))$ with $v\in A$.
By item (iii), we must have $x\in B$.
 Otherwise, $vx$ is an edge of $E(F)\setminus E(T)$ with both ends in $A$ which is impossible. 
Let $Q$ be a minimal $m$-tree-connected subgraph of $T_0$ including $v$ and $x$. 
Since $Q$ is $m$-edge-connected, 
$d_Q (v) \ge m$.
On the other hand, $d_{F}(v)\le m$.
Thus there exists an edge $vy \in E(Q)\setminus E(F)$ so that $vy\in E(T_0)\setminus E(F)$
(we might have $x = y$).
Define $T'_0=T_0-vy+vx$.
By Lemma~\ref{lem:xGy-exchange}, the graph $T_0$
is still $m$-tree-connected.
Note that $T'_0$ is a factor of $G_0\setminus E(M)$, because of $vx\in E(F)\setminus E(M)$.
In addition, $d_{T'_0}(u)\le d_{T_0}(u)$ when $u\in \{v,y\}$,
and $d_{T'_0\cup F'}(u)\le d_{T_0\cup F'}(u)$ when $u\in \{x,y\}$.
It is not hard to check that $T'_0$ satisfies items (i), (ii), and (iii).
Since $|E(T_0')\cap E(F)| > |E(T_0)\cap E(F)|$, we derive a contradiction to the maximality of $T_0$.
Hence the claim holds.

Let $H$ be an $m$-tree-connected factor of $G_0$ containing $E(F')$ satisfying 
 $d_{H}(v)\le d_T(v)+\max\{0,d_F(v)-m\}$ for all $v\in V(G)$.
Note that $T_0\cup F'$ is a natural candidate for $H$.
In fact, if $v\in A$, then by the above-mentioned claim, we must have $d_{T_0\cup F'}(v)= d_{T_0}(v)\le d_T(v) = d_T(v)+\max\{0,d_F(v)-m\}$.
Also, if $v\in B$, then by the assumption on $T_0$, we must have $d_{T_0\cup F'}(v)\le d_T(v)+d_F(v)-m$. 
Consider $H$ with the maximum $|E(H)\cap E(M)|$.
We are going to show that $H$ is the desired factor we are looking for.
To prove this, it remains to show that 
 $d_{H}(v)\ge d_F(v)$ for all $v\in V(G)$.
Suppose, to the contrary, that $d_{H}(x)< d_F(x)$ for a vertex $x\in V(G)$.
Thus there exists an edge $vx\in E(F)\setminus E(H)$. 
We may assume that $x\in B$, because $d_{H}(v) \ge m \ge d_F(v)$ for all $v\in A$.
Hence $vx\in E(M)\setminus E(H)$. 
Let $Q$ be the minimal $m$-tree-connected subgraph of $H$ including $vx$.
Since $d_F(v)\le m\le d_{Q}(v)$ and $vx\in E(F)\setminus E(Q)$, there must be an edge $vy\in E(Q)\setminus E(F)$ (we might have $x = y$).
Define $H'=H-vy+vx$.
By Lemma~\ref{lem:xGy-exchange}, the graph $H'$ is still $m$-tree-connected.
Note that $d_{H'}(u)\le d_{H}(u)\le d_T(u)+\max\{0,d_F(u)-m\}$ for all $u\in A$, and 
 $d_{H'}(u)\le d_{H}(u)\le d_T(u)+d_F(u)-m$ for all $u\in B\setminus \{x\}$.
In addition, $d_{H'}(x)\le d_F(x)\le d_T(x)+d_F(x)-m$, because of $d_T(v)\ge m$.
Since $|E(H')\cap E(M)| > |E(H)\cap E(M)|$, we derive a contradiction to the maximality of $H$.
Hence the proof is completed.
}\end{proof}
\begin{remark}
{Note that the results of this section can be developed to an $m$-rigid version instead of the current $m$-tree-connected version using the same process in the proofs. Moreover, these results can be developed to a supermodular version, see \cite{R}.
}\end{remark}
Now, we are ready to prove the main result of this section.
\begin{thm}\label{thm:tree-connected:contains}
{Let $G$ be a graph with an $m$-tree-connected factor $T$. If $G$ has a factor $F$, then it has an $m$-tree-connected factor $H$ such that for each vertex $v$, 
$$d_F(v)\le d_H(v)\le d_T(v)+\max\{0,d_F(v)-m\}.$$
}\end{thm}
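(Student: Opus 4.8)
The plan is to deduce this from Theorem~\ref{thm:4.13}, whose only extra hypothesis is that $F\setminus E(T)$ be bipartite with partite set $A=\{v:d_F(v)\le m\}$; so the whole task is to reduce an arbitrary pair $(T,F)$ to one meeting this bipartiteness condition. First I would reduce to $G=T\cup F$ and record the one structural fact that does all the easy work: since $H$ is to be $m$-tree-connected, every vertex automatically satisfies $d_H(v)\ge m$, so for $v\in A$ the required lower bound $d_H(v)\ge d_F(v)$ holds for free and the only genuine constraint there is the cap $d_H(v)\le d_T(v)$. The edges of $F\setminus E(T)$ that violate the bipartition are of two kinds: those inside $A$ and those inside $B=V(G)\setminus A$.

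The edges inside $A$ are harmless and I would simply delete them from $F$. Deleting an edge with both ends in $A$ lowers $d_F$ only at vertices that stay in $A$ (degrees there are $\le m$ before and after), it leaves the set $A$ itself unchanged, and it can only shrink the target interval $[d_F(v),\,d_T(v)+\max\{0,d_F(v)-m\}]$; hence any $H$ produced for the trimmed factor still meets the original bounds, using $d_H\ge m\ge d_F$ on $A$ for the lower estimate. After this deletion $F\setminus E(T)$ has no edge inside $A$.

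The edges inside $B$ are the crux. My first attempt would be to set aside the set $D$ of within-$B$ edges of the (already $A$-trimmed) factor, apply Theorem~\ref{thm:4.13} to $\tilde F=F\setminus D$ (now bipartite over $A$) together with a reserve $M\subseteq\tilde F\setminus E(T)$ chosen so that $d_M(v)+d_{\tilde F\cap T}(v)\ge m$ on $B$, and then restore the saved edges by setting $H=H_0\cup D$. Adding back $D$ keeps $H$ $m$-tree-connected and, for $v\in B$, raises both $d_H(v)$ and the permitted bound by exactly $d_D(v)$, so the estimate $d_H(v)\le d_T(v)+d_F(v)-m$ is recovered precisely --- provided $v$ is still counted as a $B$-vertex for $\tilde F$, i.e.\ provided $d_{\tilde F}(v)=d_F(v)-d_D(v)\ge m$.

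This proviso is exactly where the difficulty sits, and it is the step I expect to be the main obstacle: a vertex $v\in B$ may carry more than its slack $d_F(v)-m$ of within-$B$ edges, so that removing all of them drops $d_{\tilde F}(v)$ below $m$, pushes $v$ into $A$ for $\tilde F$, and then Theorem~\ref{thm:4.13} only yields $d_{H_0}(v)\le d_T(v)$, leaving an over-count of $m-d_{\tilde F}(v)$ once the edges are restored. One cannot repair this by local swaps (a single exchange in $F$ does not preserve degrees, and rerouting a within-$B$ edge along a $T$-path preserves degrees only around an $F$-alternating cycle, which need not exist). I therefore expect the honest route to be not a black-box appeal but a re-run of the proof of Theorem~\ref{thm:4.13} itself: keep the reserve $M$ and the auxiliary $m$-tree-connected factor $T_0$ avoiding $M$, optimize $|E(H)\cap E(M)|$, but now allow $M$ to contain within-$B$ edges. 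The one place that must be redone is the final augmentation: when the deficient vertex $x$ is joined to its missing partner $v$ by a within-$B$ edge, the slick selection of a non-$F$ edge at $v$ (powered in Theorem~\ref{thm:4.13} by $d_F(v)\le m\le d_Q(v)$) is unavailable, and instead I would locate the exchange edge inside the minimal $m$-tree-connected subgraph $Q\subseteq H$ spanning the missing edge via Lemma~\ref{lem:xGy-exchange}, choosing it outside $M$ and exploiting the strictly positive slack $d_F-m\ge1$ present at both ends of a within-$B$ edge to keep every degree cap intact while strictly increasing $|E(H)\cap E(M)|$.
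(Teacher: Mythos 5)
Your overall strategy (reduce to Theorem~\ref{thm:4.13}), your handling of within-$A$ edges, and your diagnosis that the all-at-once removal of within-$B$ edges overshoots the cap are all sound and match the paper's intent. The gap is in your proposed repair. Re-running the proof of Theorem~\ref{thm:4.13} with $M$ allowed to contain within-$B$ edges breaks down exactly at the step you flag, and your sketch does not fix it: when the missing edge $vx$ has both ends in $B$, nothing guarantees that the minimal $m$-tree-connected subgraph $Q\subseteq H$ through $v$ and $x$ contains any edge outside $E(F)$ at all (the original existence argument is powered by $d_F(v)\le m\le d_Q(v)$, which is false for $v\in B$, and $Q$ may lie entirely inside $F$); and if the exchange edge $e$ is not incident to $v$, then $d_{H-e+vx}(v)=d_H(v)+1$, which can violate the cap because $d_H(v)$ may already equal $d_T(v)+d_F(v)-m$. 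The ``slack $d_F(v)-m\ge 1$'' you invoke is slack in the \emph{value} of the cap, not slack between $d_H(v)$ and the cap, so it does not rescue the exchange. As written, the argument is not a proof.

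The idea you are missing is that the within-$B$ edges need not all be removed before invoking Theorem~\ref{thm:4.13}: the paper peels them off \emph{one at a time} by induction on $|E(F)|$. Pick a single edge $xy\in E(F)\setminus E(T)$ with both current endpoints in $B$, apply the induction hypothesis to $G'=G-xy$ and $F'=F-xy$ (note $T$ is untouched since $xy\notin E(T)$), and set $H=H'+xy$. Because both endpoints had $d_F\ge m+1$, they satisfy $d_{F'}\ge m$, so $\max\{0,d_{F'}-m\}=d_{F'}-m$ there, and restoring the edge raises the degree, the lower bound, and the cap each by exactly one: the estimates telescope. The point your all-at-once attempt could not control is handled automatically: once a vertex's degree drops to $m$ it migrates into $A$, and its remaining incident edges become bipartition-respecting and are never removed, so the number of edges removed at any vertex is bounded by its slack $d_F(v)-m$. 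Your dismissal of ``local swaps'' considered only repairs that keep the degrees of $F$ fixed; the induction instead changes $F$ and the target bounds together, which is why it succeeds. When no within-$A$ or within-$B$ edge of $F\setminus E(T)$ remains, one takes $M=F\setminus E(T)$, checks $d_M(v)+d_{F\cap T}(v)=d_F(v)\ge m$ on $B$, and applies Theorem~\ref{thm:4.13} verbatim.
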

\begin{proof}
{By induction on $|E(F)|$. For $|E(F)|=0$, the proof is clear.
We may assume that $|E(F)|>0$.
Put $A=\{v\in V(G):d_F(v)\le m\}$ and $B=\{v\in V(G):d_F(v)\ge m+1\}=V(G)\setminus A$. 
If there exists an edge $xy\in E(F)\setminus E(T)$ with both ends in $A$, 
then by applying the induction hypothesis to the graphs $G'=G$ and $F'=F-xy$, 
there is an $m$-tree-connected factor $H$ such that for each vertex $v$, 
$d_{F'}(v)\le d_{H}(v)\le d_T(v)+\max\{0,d_{F'}(v)-m\}$.
Note that for each $v\in \{x,y\}$, $d_{F}(v)\le m\le d_{H}(v)\le d_T(v)=d_T(v)+\max\{0,d_{F}(v)-m\}$.
It is easy to see that $H$ is the desired factor.
If there exists an edge $xy\in E(F)\setminus E(T)$ with both ends in $B$, 
then by applying the induction hypothesis to the graphs $G'=G-xy$ and $F'=F-xy$, 
there is an $m$-tree-connected factor $H'$ of $G'$ such that for each vertex $v$, 
$d_{F'}(v)\le d_{H'}(v)\le d_T(v)+\max\{0,d_{F'}(v)-m\}$.
Let $H=H'+xy$. If $v\in \{x,y\}$, then we still have $d_{F'}(v)\ge m$ and so
$d_{F}(v)\le d_{H}(v)\le d_T(v)+d_{F}(v)-m$. 
Hence it is easy to see that $H$ is the desired factor.
We may therefore assume that $F\setminus E(T)$ is a bipartite graph with the bipartition $(A,B)$. 
Take $M$ to be the graph $F\setminus E(T)$. For each $v\in B$, 
$d_M(v)+d_{F\cap T}(v)= d_F(v) \ge m$. 
Thus by Theorem~\ref{thm:4.13}, there is a factor $H$ with the desired properties.
}\end{proof}
The following corollary provides an interesting and equivalent version for Theorem~\ref{thm:tree-connected:contains}.
\begin{cor}
{Let $G$ be a graph. If $G$ contains a $(g,f)$-factor and an $m$-tree-connected $(m,f')$-factor, then it has an $m$-tree-connected $(g,f+f'-m)$-factor, where $g$, $f$, $f'$ are nonnegative integer-valued functions on $V(G)$ with $ g\le f$, $m\le f$, and $m\le f'$.
}\end{cor}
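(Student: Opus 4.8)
The plan is to derive this corollary directly from Theorem~\ref{thm:tree-connected:contains}, which already does all the structural work. First I would let $F$ denote the given $(g,f)$-factor, so that $g(v)\le d_F(v)\le f(v)$ holds at every vertex, and let $T$ denote the given $m$-tree-connected $(m,f')$-factor, so that $T$ is $m$-tree-connected and $m\le d_T(v)\le f'(v)$ for every $v$. Applying Theorem~\ref{thm:tree-connected:contains} to this pair (with $T$ as the $m$-tree-connected factor and $F$ as the factor) immediately produces an $m$-tree-connected factor $H$ satisfying
$$d_F(v)\le d_H(v)\le d_T(v)+\max\{0,d_F(v)-m\}$$
at every vertex $v$.

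It then remains only to check that $H$ is a $(g,f+f'-m)$-factor. The lower bound is immediate, since $d_H(v)\ge d_F(v)\ge g(v)$. For the upper bound I would split into two cases according to the sign of $d_F(v)-m$. If $d_F(v)\le m$, the maximum term vanishes, so $d_H(v)\le d_T(v)\le f'(v)$; combining this with the hypothesis $m\le f(v)$ gives $d_H(v)\le f'(v)\le f(v)+f'(v)-m$. If instead $d_F(v)>m$, the maximum term equals $d_F(v)-m$, and then $d_H(v)\le d_T(v)+d_F(v)-m\le f'(v)+f(v)-m$. In both cases $d_H(v)\le f(v)+f'(v)-m$, so $H$ is the desired $m$-tree-connected $(g,f+f'-m)$-factor.

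There is no genuine obstacle here: the corollary is essentially a translation of Theorem~\ref{thm:tree-connected:contains} from the language of arbitrary factors into the language of $(g,f)$-factors, with $d_F$ bounded below by $g$ and $d_T$ bounded above by $f'$. The only point requiring a moment's care is that the single bound $d_H(v)\le f(v)+f'(v)-m$ must be extracted uniformly despite the $\max\{0,\,\cdot\,\}$ appearing in the theorem, and this is precisely what the two hypotheses $m\le f$ and $m\le f'$ are there to guarantee: the inequality $m\le f$ handles the small-degree case $d_F(v)\le m$ by ensuring $f(v)+f'(v)-m\ge f'(v)$, while $m\le f'$ underlies the bound $d_T(v)\le f'(v)$ used throughout.
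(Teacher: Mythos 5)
Your proposal is correct and follows essentially the same route as the paper: both apply Theorem~\ref{thm:tree-connected:contains} to the given $(g,f)$-factor $F$ and the $m$-tree-connected $(m,f')$-factor $T$, then bound $d_T(v)\le f'(v)$ and use $m\le f$ to absorb the $\max\{0,d_F(v)-m\}$ term into $f(v)-m$. Your two-case treatment of the maximum is just an expanded version of the paper's single chain of inequalities $d_T(v)+\max\{0,d_F(v)-m\}\le f'(v)+\max\{0,f(v)-m\}=f'(v)+f(v)-m$.
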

\begin{proof}
{Let $F$ be a $(g,f)$-factor and let $T$ be an $m$-tree-connected $(m,f')$-factor of $G$.
By Theorem~\ref{thm:tree-connected:contains}, the graph $G$ has an $m$-tree-connected factor $H$ such that for each vertex $v$, $g(v)\le d_F(v)\le d_H(v)\le d_T(v)+\max\{0,d_F(v)-m\}\le f'(v)+\max\{0,f(v)-m\}= f'(v)+f(v)-m$.
Hence the assertion holds.
}\end{proof}
%
%
%
%
%
%
%
%
%
%
%


\begin{thebibliography}{10}
\bibitem{Edmonds-1970}
 J.~Edmonds, Submodular functions, matroids, and certain polyhedra, in Combinatorial {S}tructures and their {A}pplications ({P}roc. {C}algary {I}nternat. {C}onf., {C}algary, {A}lta., 1969), Gordon and Breach, New York, 1970, pp.~69--87.

\bibitem{ClosedWalks}
 M.~Hasanvand, Spanning trees and spanning closed walks with small degrees, arXiv:1702.06203v7.

\bibitem{ClosedTrails}
 M.~Hasanvand, Spanning tree-connected subgraphs and spanning closed trails with small degrees, arXiv:2205.05044.

\bibitem{R}
 M.~Hasanvand, Packing spanning rigid subgraphs with restricted degrees, arXiv:1806.07877.

\bibitem{Li-Xu-Chen-Liu-2005}
G. Li, Y. Xu, C. Chen, and Z. Liu, On connected $[g,f+1]$-factors in graphs, Combinatorica 4 (2005)~393--402.

\bibitem{Tokuda-Xu-Wang-2003}
 T.~Tokuda, B.~Xu, and J.~Wang, Connected factors and spanning trees in graphs,
 Graphs Combin. 19 (2003)~59--262.
\end{thebibliography}
\end{document}